\tikzset{node distance=3cm, auto}
\tikzset{join/.code=\tikzset{after node path={%
\ifx\tikzchainprevious\pgfutil@empty\else(\tikzchainprevious)%
edge[every join]#1(\tikzchaincurrent)\fi}}}
\tikzset{>=stealth',every on chain/.append style={join},
         every join/.style={->}}
\newtheorem{proposition}{Proposition}[section]
\newtheorem{theorem}{Theorem}[section]
\newtheorem{lemma}{Lemma}[section]
\theoremstyle{definition}
\newtheorem{definition}{Definition}[section]
\numberwithin{equation}{subsection}
\renewcommand{\hom}{\text{Hom}}
\author{Oscar Kivinen}
\title{Computations for Binomial Edge Ideals and Koszul Duality}
\date{\today}
\begin{document}
\maketitle

\section{Introduction}
A graded $k$-algebra $A$ is called Koszul if the residue field $A_0=k$ 
admits a linear free resolution as an $A$-module. Koszul algebras have 
gained plenty of interest, as they arise in several different 
mathematical contexts, such as combinatorics, algebraic geometry and 
representation theory, see for example \cite{froe} for a survey. 
Partly due to this, a remarkable fact about Koszul algebras is that they can be characterized in multiple ways, and that the class of Koszul algebras is closed under various natural operations.

Binomial edge ideals of (undirected, simple) graphs were introduced 
and studied by Herzog, Hibi et al. in the context of algebraic 
statistics in \cite{ci}. These provide a nice combinatorially 
characterized class of commutative quadratic algebras, and their 
algebraic properties have been discussed for example in 
\cite{cr,ecm,ek,sm}. The natural question of Koszulness of a binomial 
edge ideal $J_G$ can be transferred to combinatorial properties of the graph $G$, 
but lacks a full answer. Ene, Herzog and Hibi \cite{ek,ekf} investigated also some stronger notions of Koszulness for binomial edge ideals. Partial results in \cite{ek} give the 
implications \begin{center}
$G$ is closed $\Rightarrow$ $G$ is Koszul $\Rightarrow$ $G$ is chordal 
and claw-free.
\end{center}
These implications are strict, as a few counterexamples show. See 
\cref{gluingsect} for the precise definitions of these concepts. In this paper, we make some observations on binomial edge ideals, with the 
characterization of Koszulness as motivation.  In \cref{gluingsect}, we discuss building Koszul graphs from smaller pieces in a controlled manner. In \cref{cone}, the Koszul 
property of cone graphs is completely characterized. In \cref{resolution}, we compute the first two syzygies of the infinite minimal free resolution of $k$ over the algebra defined by a binomial edge ideal, and these happen to be always linear. In \cref{dual} we give a description of generators and relations for the Koszul or quadratic dual of the algebra defined by an arbitrary binomial edge ideal. 

\section{Binomial edge ideals of closed and Koszul graphs}\label{gluingsect}

For the rest of the article, we fix a simple graph $G=(V,E)$ on $n$ vertices, ie. one without loops or multiple edges, and label the vertices with $[n]$. To each vertex $i$, associate the variables $x_i,y_i$. The binomial edge ideal associated to $G$ is defined as $J_G:=\langle f_{ij}:=x_iy_j-y_jx_i |\{i,j\} \in E(G)\rangle \subset S:=k[x_1,\ldots,x_n,y_1,\ldots,y_n]$. For simplicity, we assume $k$ is a field of characteristic zero. 
 
First off, we collect some previous results on the algebraic and combinatorial properties of binomial edge ideals. Herzog, Hibi et al. gave in \cite{ci} the following complete characterization of the Gröbner bases of binomial edge ideals with respect to the lexicographic order induced by the labeling.
 
\begin{definition}
A path $i=i_0,\ldots,i_r=j$ with $i<j$ in a simple graph on $[n]$ is called \textit{admissible}, if \begin{enumerate}[(i)]
\item $i_l \neq i_k$ for $k \neq l$
\item for each $k=1,\ldots,r-1$ one has either $i_k<i$ or $i_k>j$
\item for any proper subset $\{j_1,\ldots,j_s\}$ of $\{i_1,\ldots,i_{r-1}\}$, the sequence $i,j_1,\ldots,j_s,j$ is not a path.
\end{enumerate}
\end{definition}
To each admissible path $\pi$ we associate a monomial 
$u_\pi=\prod_{i_k>j} x_{i_k}\prod_{i_l<i}y_{i_l}$. We then have the 
following theorem. \begin{theorem} $$\mathcal{G}=\bigcup_{i<j}\{u_\pi 
f_{ij}|\pi \text{ is an admissible path from i to j}\}$$ is a reduced 
Gröbner basis of $J_G$ for any simple $G$ on $[n]$, with respect to the lexicographic ordering induced by the labeling of the vertices. \end{theorem}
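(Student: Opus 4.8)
The plan is to verify Buchberger's criterion for $\mathcal{G}$ with respect to the lexicographic order induced by $x_1>\dots>x_n>y_1>\dots>y_n$. As a preliminary step one checks that $\mathcal{G}\subseteq J_G$: although $f_{ij}=x_iy_j-x_jy_i$ is a generator of $J_G$ only when $\{i,j\}\in E(G)$, for an admissible path $\pi\colon i=i_0,i_1,\dots,i_r=j$ the element $u_\pi f_{ij}$ still lies in $J_G$. This follows by induction on $r$ from the Plücker-type identities
\[
x_bf_{ac}=x_af_{bc}+x_cf_{ab},\qquad y_bf_{ac}=y_af_{bc}+y_cf_{ab},
\]
which let one eliminate the interior vertices of $\pi$ one at a time: a vertex $i_k>j$ is removed using the first identity, accumulating the factor $x_{i_k}$, and a vertex $i_k<i$ using the second, accumulating $y_{i_k}$. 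Condition (ii) guarantees that every interior vertex is of exactly one of these two types, so the factor built up in this process is precisely $u_\pi$.

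Next one records the leading terms. Since $i<j$ we have $\mathrm{in}_<(f_{ij})=x_iy_j$, and since the vertices of $\pi$ are distinct with every interior vertex either less than $i$ or greater than $j$, the monomial $u_\pi$ contains neither $x_i$ nor $y_j$. Hence $\mathrm{in}_<(u_\pi f_{ij})=u_\pi x_iy_j$, which is squarefree; in particular the candidate initial ideal, generated by the monomials $u_\pi x_iy_j$, is squarefree, which will give radicality of $\mathrm{in}_<(J_G)$ as a byproduct once the theorem is proved.

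The core of the argument is to show that for any two admissible paths $\pi\colon i\to j$ and $\sigma\colon k\to l$ the $S$-polynomial $S(u_\pi f_{ij},u_\sigma f_{kl})$ reduces to zero on division by $\mathcal{G}$. If $\{i,j\}=\{k,l\}$ then $f_{ij}=f_{kl}$ and $S(u_\pi f_{ij},u_\sigma f_{ij})=0$ identically. Otherwise, writing $w=\mathrm{lcm}(u_\pi x_iy_j,u_\sigma x_ky_l)$, the $S$-polynomial is a polynomial combination of the $2\times2$ minors $f_{ij}$ and $f_{kl}$ of the generic $2\times n$ matrix, so the algebraic content of each reduction step is governed by the identities above (equivalently, by the classical fact that the minors $\{f_{ab}:a<b\}$ form a Gröbner basis of $J_{K_n}$ with initial terms $x_ay_b$). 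The combinatorial content is to verify that whenever such an identity is invoked, the binomials it introduces are again of the form $u_\tau f_{pq}$ for an \emph{admissible} path $\tau$ in $G$ obtained from $\pi$ and $\sigma$ by concatenation at a common vertex, by truncation, or by splicing. One organizes the work according to how the two leading monomials share variables: up to the obvious symmetries the relevant configurations are that an endpoint of one path equals an endpoint of the other, that an endpoint of one is an interior vertex of the other, or that a shared vertex forces concatenation of the paths; if the leading monomials are coprime the $S$-pair reduces to zero automatically. In each configuration one exhibits the required admissible path(s) and checks that the resulting expansion is a reduction to zero modulo $\mathcal{G}$.

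The main obstacle is precisely this case analysis, and within it the verification of condition (iii) for the newly built path $\tau$: the ``no shortcut'' minimality condition is not preserved under naive concatenation, so one must argue that any shortcut in $\tau$ either already contradicts the admissibility of $\pi$ or of $\sigma$, or else produces a strictly shorter admissible path whose associated element still belongs to $\mathcal{G}$ and still divides the term being reduced (one must also watch condition (ii), since a concatenated path can acquire an interior vertex that is too small or too large, and such $S$-pairs have to be shown to reduce by other means). Once every $S$-pair has been treated, Buchberger's criterion gives that $\mathcal{G}$ is a Gröbner basis of $J_G$. That it is \emph{reduced} then follows because each $u_\pi f_{ij}$ is monic and, once more by condition (iii), neither of its monomials $u_\pi x_iy_j$ and $u_\pi x_jy_i$ is divisible by the leading term $u_\sigma x_ky_l$ of a different element of $\mathcal{G}$; such a divisibility would display one admissible path as containing a forbidden shortcut of the other.
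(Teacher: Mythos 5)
Your proposal follows the same route as the proof this paper defers to (\cite{ci}, Theorem 2.1): establish membership $u_\pi f_{ij}\in J_G$ via the Pl\"ucker-type relations, identify the leading terms $u_\pi x_iy_j$, and then verify Buchberger's criterion by a case analysis on pairs of admissible paths. The preliminary steps are correct as you state them: the identity $x_bf_{ac}=x_af_{bc}+x_cf_{ab}$ (and its $y$-analogue) holds with the antisymmetric convention $f_{uv}=-f_{vu}$, the induction eliminating interior vertices closes because the shortened path $i_1,\dots,i_r$ is still a path in $G$ and the choice of $x_{i_k}$ versus $y_{i_k}$ is unconstrained by the induction itself (condition (ii) only serves to make the accumulated monomial equal to $u_\pi$ on the nose), and $\mathrm{in}_<(u_\pi f_{ij})=u_\pi x_iy_j$ is computed correctly.

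However, there is a genuine gap exactly where the content of the theorem lies: the reduction to zero of the $S$-polynomials. You correctly name the difficulties --- a concatenation or splice of two admissible paths need not satisfy (ii) or (iii), and the ``no shortcut'' condition must be re-established or replaced by a strictly shorter admissible path at each step --- but you do not carry out the case analysis, and there is no way around it: this is the several-page core of the argument in \cite{ci}, and is what the present paper calls ``somewhat long.'' In particular, the assertion that each reduction step introduces only elements of the form $u_\tau f_{pq}$ with $\tau$ admissible, with leading terms small enough to constitute a valid reduction, is precisely the statement to be proved; it does not follow from the determinantal identities or from the fact that the $f_{ab}$ form a Gr\"obner basis of $J_{K_n}$, since that fact says nothing about which monomial multiples $u_\tau$ arise or whether the corresponding paths exist in $G$. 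The reducedness claim at the end needs the same care: one must check that the trailing term $u_\pi x_jy_i$ is not divisible by any leading term $u_\sigma x_ky_l$ of another element, which is again an argument about condition (iii) that you assert rather than prove. So the plan is the right one and matches the cited proof, but as written it is an outline rather than a proof.
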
 The 
proof is somewhat long and can be found in \cite{ci}, Theorem 2.1.. Theorem 1.1. of the same paper shows that this Gröbner basis is quadratic if and only if $G$ is closed. Recall that a closed graph is one which admits a labeling on $[n]$ so that for all $\{i,j\}, \{k,l\} \in E$ with $i<j$ and $k<l$ one has $\{j,l\} \in E(G)$ if $i=k$, but $j=l$, and $\{i,k\} \in E(G)$ if $j=l$ but $i \neq k$. This can also be shown to be equivalent to the condition that for $1\leq i <j \leq n$ the shortest path between $i$ and $j$ is monotone.

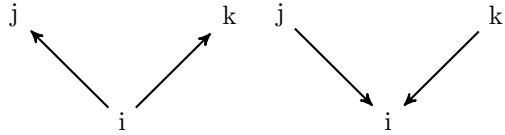
\begin{figure}[here]
\begin{center}
\begin{tikzpicture}[->,>=stealth',shorten >=1pt,auto,node distance=2cm,
  thick,main node/.style={draw=none,fill=none},scale=0.7]

  \node[main node] (1) {i};
  \node[main node] (2) [above left of=1] {j};
  \node[main node] (3) [above right of=1] {k};
  \node[main node] (4) [right of=1, node distance=3.5cm] {i};
  \node[main node] (5) [above left of=4] {j};
  \node[main node] (6) [above right of=4] {k};

  \path[every node/.style={font=\sffamily\small}]

    (1) edge node {} (2)
        edge node {} (3)
    (5) edge node {} (4)
    (6) edge node {} (4);
    
\end{tikzpicture}
\caption{Visualization of the characterization of closed graphs. In case there are directed edges coming from the labeling as shown, $\{j,k\}$ must also be an edge.}
\end{center}
\end{figure}

In \cite{cr}, Crupi and Rinaldo show that $G$ is closed if and only if the clique complex $\Delta(G)$ admits a labeling in which the facets are intervals. For example, note that the graph in \cref{internalgluing} admits only two such labelings. In \cite{ek}, Ene used 
these Gröbner bases and some arguments from \cite{con}, to show that gluing in the obvious sense along a free vertex preserves Koszulness. We call a vertex free if it is contained in a unique facet of $\Delta(G)$.
\begin{proposition}\label{gluing}
Let $G_1, G_2$ be two graphs with free vertices $v, v'$ 
respectively. Then the graph $G=(V,E)$ with vertex set 
$V(G_1)\cup (V(G_2)\backslash \{v'\})$ and edge set 
$$E(G_1)\cup (E(G_2)\backslash \{\{i,v'\} \in E(G_2)\}) 
\cup \{\{i,v\}|\{i,v'\} \in E(G_2)\}$$ formed by gluing 
$G_1, G_2$ along $v, v'$ is Koszul if and only if $G_1$ 
and $G_2$ are Koszul.\end{proposition}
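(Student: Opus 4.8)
The plan is to exhibit $R_G := S_G/J_G$ (and similarly $R_{G_i} := S_{G_i}/J_{G_i}$) as an amalgamated tensor product, and then to treat the two implications separately: the forward one by a regular‑sequence argument, the reverse one by the theory of algebra retracts. For the presentation, set $C := k[x_v,y_v]$ and let $C\to R_{G_i}$ send $x_v,y_v$ to the residues of the two variables attached to the free vertex (which is $v$ in $G_1$ and $v'$ in $G_2$, these being identified after the gluing). Directly from the definitions of $J_G$ and of $E(G)$ one has $S_G = S_{G_1}\otimes_C S_{G_2}$ and $J_G = J_{G_1}S_G + J_{G_2}S_G$, so by right‑exactness of $\otimes$,
$$R_G \;\cong\; R_{G_1}\otimes_C R_{G_2} \;=\; \bigl(R_{G_1}\otimes_k R_{G_2}\bigr)\big/(\ell_1,\ell_2),\qquad \ell_1 := x_v\otimes 1 - 1\otimes x_{v'},\quad \ell_2 := y_v\otimes 1 - 1\otimes y_{v'}.$$
Both halves of the proof start from this identification.

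For ``$G_1,G_2$ Koszul $\Rightarrow$ $G$ Koszul'': a tensor product over $k$ of Koszul algebras is Koszul, and the quotient of a Koszul algebra by a regular sequence of linear forms is again Koszul (see \cite{froe}), so everything reduces to showing that $\ell_1,\ell_2$ is a regular sequence on $R_{G_1}\otimes_k R_{G_2}$ — equivalently, since the Koszul complex on $(\ell_1,\ell_2)$ computes it, that $\operatorname{Tor}^C_i(R_{G_1},R_{G_2}) = 0$ for $i>0$. This is precisely the step that uses freeness of $v,v'$, and it genuinely needs it: gluing two copies of $P_3$ at their middle (non‑free) vertex produces $K_{1,4}$, for which $\dim R_G$ exceeds $\dim R_{G_1} + \dim R_{G_2} - 2$, so the analogous two linear forms are not a regular sequence there.

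To prove the regular‑sequence claim I would argue as follows. A free vertex has a clique for its neighbourhood, so it is never a cut vertex of any induced subgraph and hence lies in no cut set of $G_1$; combined with the primary decomposition $J_{G_1} = \bigcap_{\mathcal S} P_{\mathcal S}(G_1)$ (see \cite{ci}), whose primes contain $x_v$ or $y_v$ only when $v\in\mathcal S$, this shows $x_v$ and $y_v$ are nonzerodivisors on the reduced ring $R_{G_1}$, and likewise $x_{v'},y_{v'}$ on $R_{G_2}$. It follows immediately that $\ell_1$ is a nonzerodivisor on the reduced ring $R_{G_1}\otimes_k R_{G_2}$ — each of its minimal primes has the form $\mathfrak p\otimes_k R_{G_2} + R_{G_1}\otimes_k\mathfrak q$, and $\ell_1$ lies in it only if $x_v\in\mathfrak p$ and $x_{v'}\in\mathfrak q$ — and also that $\operatorname{Tor}^C_{\ge 2}(R_{G_1},R_{G_2}) = 0$, because $x_v$ being a nonzerodivisor forces $R_{G_1}$ to have flat dimension $\le 1$ over $C = k[x_v,y_v]$. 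What remains, namely $\operatorname{Tor}^C_1(R_{G_1},R_{G_2}) = 0$, i.e.\ that $\ell_2$ is a nonzerodivisor modulo $\ell_1$, I expect to be the main obstacle: it should follow from a closer analysis of the associated primes of $(R_{G_1}\otimes_k R_{G_2})/(\ell_1) = R_{G_1}\otimes_{k[x_v]}R_{G_2}$, using the local structure of $J_{G_i}$ at the unique facet through the free vertex, and this is essentially the ingredient taken from \cite{con} in \cite{ek}.

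The reverse implication ``$G$ Koszul $\Rightarrow$ $G_1,G_2$ Koszul'' needs no freeness and is quick: $R_{G_1}$ is a graded $k$‑algebra retract of $R_G$. Since $E(G_1)\subseteq E(G)$, the inclusion $S_{G_1}\hookrightarrow S_G$ carries $J_{G_1}$ into $J_G$ and induces $\iota\colon R_{G_1}\to R_G$; and the surjection $S_G\twoheadrightarrow S_{G_1}$ that fixes the variables of $G_1$ and kills $x_i,y_i$ for $i\in V(G_2)\setminus\{v'\}$ carries $J_G$ into $J_{G_1}$ — the only case to check is a new edge $\{v,b\}$ at $v$, where $x_vy_b - x_by_v\mapsto 0$ — and hence induces $\pi\colon R_G\to R_{G_1}$ with $\pi\circ\iota = \operatorname{id}_{R_{G_1}}$. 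As a graded algebra retract of a Koszul algebra is Koszul \cite{froe}, $R_{G_1}$ is Koszul, and symmetrically $R_{G_2}$ is; together with the forward implication this proves the proposition.
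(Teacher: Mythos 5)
Your overall strategy coincides with that of the proof the paper defers to (\cite{ek}, Theorem 2.4): present $R_G \cong (R_{G_1}\otimes_k R_{G_2})/(\ell_1,\ell_2)$ with $\ell_1 = x_v\otimes 1 - 1\otimes x_{v'}$, $\ell_2 = y_v\otimes 1 - 1\otimes y_{v'}$, and combine the facts that Koszulness passes through tensor products over $k$ and through quotients by degree-one nonzerodivisors --- the latter being exactly the Backelin--Fr\"oberg criterion that the paper singles out as the engine of the cited argument. Your reverse implication via the graded retract $R_{G_1}\hookrightarrow R_G \twoheadrightarrow R_{G_1}$ is correct and complete, and it is a slightly cleaner route than extracting that direction from the same regular sequence via the ``if and only if'' form of the criterion, since it uses no freeness hypothesis at all.

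The forward implication, however, contains a genuine gap, and it sits precisely at the technical heart of the proposition. You correctly reduce everything to $\ell_1,\ell_2$ being a regular sequence on $R_{G_1}\otimes_k R_{G_2}$, and your proof that $\ell_1$ is a nonzerodivisor is sound (though your description of the minimal primes of the tensor product as $\mathfrak p\otimes R_{G_2}+R_{G_1}\otimes\mathfrak q$ tacitly uses that the minimal primes $P_{\mathcal S}$ of a binomial edge ideal are geometrically integral --- true, since they are sums of determinantal primes and variables, but it should be said). The remaining step, $\operatorname{Tor}_1^C(R_{G_1},R_{G_2})=0$, i.e.\ that $\ell_2$ is a nonzerodivisor modulo $\ell_1$, is left as ``I expect\dots it should follow from a closer analysis,'' and the method you used for $\ell_1$ does not transfer: $(R_{G_1}\otimes_k R_{G_2})/(\ell_1)$ is no longer visibly reduced, so one must control its associated primes, not merely its minimal ones. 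This is exactly the ingredient that \cite{ek} imports from \cite{con}, where it is established by an inductive analysis of $J_G$ relative to the unique facet containing the free vertex. As written, your argument therefore proves the reverse implication in full but only the easy half of the forward one; to match the cited proof you would need to supply (or explicitly quote) that one lemma rather than conjecture it.
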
 For a proof, see 
\cite{ek}, Theorem 2.4.. We will simply note that the 
proof relies on the fact that a quadratic $k$-algebra is 
Koszul if and only if for any quadratic or linear 
nonzerodivisor $f\in A$, the algebra $A/\langle f\rangle$ is Koszul.

A natural question that arises is: does this generalize to higher codimension minors, ie. does gluing along for example a free edge or free triangle preserve Koszulness? For plenty of small graphs that can be decomposed to Koszul or closed pieces, we have not yet been able to find a counterexample. With the aid of the computer algebra package Macaulay2 \cite{m2}, we computed examples up to graphs with $9$ vertices, and also got some hints on Koszulness by considering the beginning of the Koszul complex over $S/J_G$. However, this conjecture is not amenable to the same techniques as the proof of \cref{gluing}, since we cannot get a similar regular sequence anymore. We will next try to make this precise. 

In order to simplify things, one can consider coning free 
minors of $G$ to get new graphs that might be Koszul. Again, we are 
yet to find a counterexample. The naive approach to this, or gluing 
in higher codimension if we restrict our graphs enough, is to 
consider adding edges one-by-one, ie. to create new algebras 
$A/\langle f_e\rangle$ for some $e \notin E(G)$ and $A:=S/J_G$. This is due to the results in \cite{bf}, which state that if $f_e$ is strongly free or a zerodivisor and $A$ is Koszul, then $A/\langle f_e \rangle$ is Koszul. (An element $f_e\in A$ is called strongly free if $A$ has the same Hilbert series as the free product of $A/\langle f_e \rangle$ and $k[f_e]$.)

Both conditions can be seen 
to fail as follows: after relabeling, we may assume that our free 
minor is in the facet $[n-i,n]$, moreover on the vertices $[n-
j,n]$. Now coning a free vertex in a Koszul graph is the same as 
gluing an edge, call the resulting graph $G'$, with $V(G')=V(G)\cup 
\{n+1\}$. The addition of an internal edge, say $\{n+1,n\}$, 
corresponds algebraically to $S/J_{G'}\to S/J_G'+\langle 
f_{n+1,n}\rangle$. But the simple example in \cref{internalgluing} shows that in general, $f_e$ is not a nonzerodivisor nor strongly free, as can be computed with the computer algebra system Macaulay2 \cite{m2}.
\begin{figure}
\begin{center}
\begin{tikzpicture}[-,>=stealth',shorten >=1pt,auto,node distance=0.2cm,
  thick,main node/.style={circle,thick,draw=black,fill=black,inner sep=0pt,minimum width=3pt,minimum height=3pt},scale=.7]
  
\node[main node] (1) at (0,0) {};
\node[main node,fill=red, thin] (2) at (2,0) {};
\node[main node] (3) at (1,1) {};
\node[main node,fill=red, thin] (4) at (1,2) {};

\draw (3) -- (2) -- (1) -- (3) -- (4);
\draw[->] (3,0.5) -- (4,0.5);

\node[main node] (1a) at (5,0.5) {};
\node[main node] (2a) at (6,1.5) {};
\node[main node,fill=red,thin] (3a) at (6,-0.5) {};
\node[main node,fill=red,thin] (4a) at (7,0.5) {};

\draw (1a) -- (2a) -- (3a) -- (1a)
(2a)--(4a)--(3a);

\end{tikzpicture}
\caption{Example of a graph where adding an internal edge retains Koszulness, even closedness, but has a specific obstruction to an algebraic proof.\label[figure]{internalgluing}}
\end{center}
\end{figure}
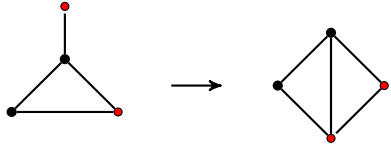

\section{Koszul cone graphs}

The main aim of this section is to classify all Koszul graphs that are cones. In \cite{con}, Rauf and Rinaldo considered the unmixed and Cohen-Macaulay properties for binomial edge ideals of cones. Cox and Erskine showed that a graph $G$ is closed if and only if it is chordal, claw--free and narrow, see \cite{cox}, Theorem 4.1.. That is, that $G$ has no induced cycles of length $>3$, no induced claws, and that the distance from each vertex to any longest shortest path is at most one.

We observe that for a simple graph $G$, the graph $cone(v,G)$ is claw-free if and only if the independence complex $Ind(G)$ is triangle-free. This is because if there is $\{a,b,c\} \in Ind(G)$, then $\{a,b,c,v\}$ is a claw with edges $\{av,bv,cv\}$ in $cone(v,G)$. Conversely, we may consider $G$ claw-free, and if $cone(v,G)$ has a claw, it is of the form $\{a,b,c,v\}$ with edges $\{av,bv,cv\}$, hence $\{a,b,c\}$ must be a triangle in $Ind(G)$.

Furthermore, we always have $diam(cone(v,G))\leq 2$. Especially, the distance of a vertex from a longest shortest path cannot be greater than one, so any cone is narrow.

\begin{proposition}\label{cone}
Let $G$ be simple. Then for $G'=cone(v,G)$ the following are equivalent:\begin{enumerate}
\item $G'$ is Koszul
\item $G'$ is closed
\item $Ind(G)$ is triangle-free and $G$ is chordal.
\end{enumerate}
\end{proposition}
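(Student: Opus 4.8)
\section*{Proof proposal}

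The plan is to prove the cyclic chain $(2)\Rightarrow(1)\Rightarrow(3)\Rightarrow(2)$, using the results quoted in the introduction together with the two observations recorded immediately above the statement and the theorem of Cox and Erskine.

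The implication $(2)\Rightarrow(1)$ is immediate from \cite{ek}: every closed graph is Koszul. For $(1)\Rightarrow(3)$, suppose $G'=\mathrm{cone}(v,G)$ is Koszul. By the partial results of \cite{ek}, $G'$ is then chordal and claw-free. Since $G$ is the induced subgraph of $G'$ on $V(G)$ and chordality passes to induced subgraphs, $G$ is chordal; and since $G'$ is claw-free, the observation that $\mathrm{cone}(v,G)$ is claw-free if and only if $\mathrm{Ind}(G)$ is triangle-free gives that $\mathrm{Ind}(G)$ is triangle-free. This is exactly condition $(3)$.

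For $(3)\Rightarrow(2)$, assume $\mathrm{Ind}(G)$ is triangle-free and $G$ is chordal. By the same observation $G'$ is claw-free, and by the observation that every cone has diameter at most $2$, $G'$ is narrow. The remaining point is that $G'$ is chordal, for which I would record the small lemma that $\mathrm{cone}(v,G)$ is chordal if and only if $G$ is: an induced cycle of length $\geq 4$ in $G'$ cannot avoid $v$ (or it would be an induced cycle of $G$), and cannot contain $v$ (since $v$ is adjacent to every other vertex, any vertex of the cycle other than the two neighbours of $v$ along it would be a chord), so no such cycle exists. With $G'$ chordal, claw-free and narrow, Cox--Erskine (\cite{cox}, Theorem 4.1.) yields that $G'$ is closed.

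The proposition is thus largely an assembly of facts already in hand, so there is no single genuinely hard step; the only ingredient not already present in the excerpt is the elementary remark that coning preserves and reflects chordality, and the small amount of care needed there — essentially the observation that a universal vertex cannot lie on a long induced cycle — is the place where I would be most careful.
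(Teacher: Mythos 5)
Your proof is correct and follows essentially the same route as the paper: both rest on the Ene--Herzog--Hibi implications (closed $\Rightarrow$ Koszul $\Rightarrow$ chordal and claw-free), the Cox--Erskine characterization of closed graphs as chordal, claw-free and narrow, and the two observations about cones recorded before the statement, differing only in that you arrange the implications as a cycle $(2)\Rightarrow(1)\Rightarrow(3)\Rightarrow(2)$ rather than as the two equivalences $(1)\Leftrightarrow(2)$ and $(2)\Leftrightarrow(3)$. The one place you add genuine detail is the lemma that $cone(v,G)$ is chordal if and only if $G$ is, which the paper asserts without proof; your argument (a long induced cycle can neither avoid the apex nor contain it) is correct.
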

\begin{proof}
Suppose $G'$ is Koszul. Then it is claw--free and chordal, by \cite{ek}, Theorem 1.1.. But by the second observation, $G'$ is then closed. The converse is obvious. If $G'$ is closed, it is chordal, claw-free and narrow. We see that $G'$ is chordal iff $G$ is, so by the first observation above $Ind(G)$ is triangle-free and $G$ is chordal. Finally, the converse follows from the second observation.
\end{proof}

\section{The quadratic dual and the Koszul resolution}
In this section, we describe the quadratic dual of binomial edge 
ideals for an arbitrary graph. In \cref{resolution}, we also 
explicitly compute the beginning of the  Koszul complex for 
$R=S/J_G$. Let $V\cong k^{2n}$ be a vector space, and $T(V)=V\oplus 
V^{\otimes 2} \oplus V^{\otimes 3}\oplus \ldots$ its tensor 
algebra. Choosing a basis, we can consider $T(V)=k\langle x_1,
\ldots,x_n,y_1,\ldots,y_n\rangle$, a noncommutative polynomial 
algebra. Letting $I$ be a quadratic (left) ideal $I \subset 
V\otimes V$ and $R=T(V)/I$ the algebra it defines, we dualize the 
exact sequence $$ 0 \to I \to V \otimes V \to (V \otimes V)/I \to 
0$$ with respect to $k$, to get the sequence $$0 \leftarrow 
(V^*\otimes V^*)/I^\perp \leftarrow V^*\otimes V^* \leftarrow 
I^\perp \leftarrow 0,$$ where the quadratic dual of $I$ is 
$$I^\perp:=\{\phi \in \hom(T(V),k)|\phi(I)=0\}.$$ Using a 
characterization in \cite{froe}, we have the following theorem.
\begin{theorem}\label{dual}
The quadratic dual of the algebra $S/J_G$ defined by the binomial edge ideal of $G$ is $$ k\langle x_1,\ldots,x_n,y_1,\ldots,y_n\rangle/J_G^\perp,$$ where $J_G^\perp$ is generated by the relations \begin{align*}
 x_i^2=0, y_i^2=0, \; 1\leq i \leq n,\\ x_ix_j+x_jx_i=0,y_iy_j+y_jy_i=0, \; 1\leq j<i\leq n,\\ x_iy_j+y_jx_i=0, \; \{i,j\} \notin E(G),\\ x_iy_j+y_jx_i+x_jy_i+y_ix_j=0,\; \{i,j\}\in E(G).
\end{align*}
\end{theorem}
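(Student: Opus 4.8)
The plan is to compute $J_G^\perp$ directly from the definition by dualizing a basis. Write $V = k^{2n}$ with basis $x_1,\dots,x_n,y_1,\dots,y_n$, and let $V^*$ have the dual basis, which I will by abuse of notation also call $x_1,\dots,x_n,y_1,\dots,y_n$, so that $T(V^*) = k\langle x_1,\dots,x_n,y_1,\dots,y_n\rangle$. The space $V\otimes V$ has the monomial basis $\{x_ax_b, x_ay_b, y_ax_b, y_ay_b\}$, and $(V\otimes V)^* \cong V^*\otimes V^*$ has the corresponding dual monomial basis, with the pairing being $1$ when two monomials agree and $0$ otherwise. Since $S/J_G$ is commutative quadratic, its defining quadratic subspace $I \subset V\otimes V$ is spanned by the commutators $x_ax_b - x_bx_a$, $x_ay_b - y_bx_a$, $y_ay_b - y_by_a$ for $a < b$ (and $a=b$ contributes nothing), together with the binomial relations $x_iy_j - y_jx_i$ and $x_jy_i - y_ix_j$ for each $\{i,j\}\in E(G)$. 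First I would tabulate this generating set carefully, noting that for an edge $\{i,j\}$ the relevant four-dimensional coordinate subspace spanned by $x_iy_j, y_ix_j, x_jy_i, y_jx_i$ contains the $3$-dimensional subspace of $I$ spanned by $x_iy_j - y_jx_i$, $x_jy_i - y_ix_j$, and $x_iy_j - x_jy_i$ (the last being a consequence), whereas for a non-edge the corresponding $2$-dimensional subspace $\langle x_iy_j, y_jx_i\rangle$ contributes only the line $x_iy_j - y_jx_i$ to $I$.

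The computation of $I^\perp$ then decouples over these coordinate blocks of $V\otimes V$, because $I$ is a direct sum of its intersections with the blocks $\langle x_ax_b, x_bx_a\rangle$, $\langle y_ay_b, y_by_a\rangle$ (for $a<b$), $\langle x_ay_b, y_bx_a\rangle$ (for $\{a,b\}\notin E$, with $\langle a,b\rangle$ unordered taken as $\{x_ay_b,y_bx_a\}$ and also $\{x_by_a, y_ax_b\}$ if $a\ne b$), $\langle x_iy_j,y_jx_i,x_jy_i,y_ix_j\rangle$ (for $\{i,j\}\in E$), and the one-dimensional blocks $\langle x_a^2\rangle$, $\langle y_a^2\rangle$. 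On each block, $I^\perp$ is the annihilator of $I\cap(\text{block})$ inside the dual block. So I would go block by block: on $\langle x_a^2\rangle$ we have $I\cap(\text{block}) = 0$, so the annihilator is the whole line, giving the relation $x_a^2 = 0$ in the dual; on the $2$-dimensional commutator blocks $\langle x_ax_b, x_bx_a\rangle$, $I$ is the antisymmetric line so its annihilator is the symmetric line $x_ax_b + x_bx_a$; for a non-edge $\{a,b\}$, likewise $I$ is the line $x_ay_b - y_bx_a$ and the annihilator is $x_ay_b + y_bx_a$; and for an edge $\{i,j\}$, $I\cap(\text{block})$ is $3$-dimensional, so its annihilator in the $4$-dimensional dual block is a single line, which I claim is spanned by $x_iy_j + y_jx_i + x_jy_i + y_ix_j$ — this is verified by checking it pairs to zero against each of the three generators $x_iy_j - y_jx_i$, $x_jy_i - y_ix_j$, $x_iy_j - x_jy_i$.

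Assembling the blocks gives exactly the list of relations in the statement. I should also double-check the edge cases $i = j$ (which do not arise since $G$ is simple) and confirm that the quadratic relations $x_iy_i + y_ix_i$ for loops do not appear — they don't, consistent with the absence of loops. The main technical point to get right, rather than a genuine obstacle, is bookkeeping: making sure the block decomposition of $V\otimes V$ is exhaustive and that the dimension count $\dim I^\perp = \dim(V\otimes V) - \dim I$ matches the number of independent relations produced (for each edge we trade a $3$-dimensional piece of $I$ for a $1$-dimensional piece of $I^\perp$, and for each non-edge pair a $1$-dimensional piece of $I$ for a $1$-dimensional piece of $I^\perp$, etc.), so that the listed relations are not merely contained in $J_G^\perp$ but generate it. Finally I would invoke the characterization of the quadratic dual from \cite{froe} to conclude that $T(V^*)/I^\perp = k\langle x_1,\dots,x_n,y_1,\dots,y_n\rangle / J_G^\perp$ is the quadratic dual of $S/J_G$, which completes the proof.
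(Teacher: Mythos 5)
Your proof is correct, and the block-by-block annihilator computation is carried out properly: the dimension counts ($3$-dimensional intersection of $I$ with the edge block $\langle x_iy_j,y_jx_i,x_jy_i,y_ix_j\rangle$, hence a $1$-dimensional annihilator spanned by $x_iy_j+y_jx_i+x_jy_i+y_ix_j$; $1$-dimensional intersections with the commutator blocks, hence the symmetric lines) all check out, and your global dimension count $\dim I^\perp=2n^2+n-|E|$ matches the number of listed relations. One labeling slip to fix: the elements you call ``the binomial relations,'' namely $x_iy_j-y_jx_i$ and $x_jy_i-y_ix_j$, are just commutators (present for every pair, edge or not), while the element you parenthetically dismiss as ``a consequence,'' $x_iy_j-x_jy_i$, is precisely the binomial generator $f_{ij}$ of $J_G$ and is \emph{not} a linear consequence of the two commutators --- it is exactly what raises the edge block from dimension $2$ to dimension $3$. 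Since you nonetheless use the correct $3$-dimensional span and pair the annihilator against all three elements, the computation is unaffected, but the sentence as written misidentifies the generators. As for the route: the paper instead quotes Fr\"oberg's ready-made recipe for the dual of a \emph{commutative} quadratic algebra, which packages all the commutator blocks at once (producing the anticommutators $[Y_i,Y_j]$ and squares automatically) and reduces the problem to solving a small linear system $\sum b_{ijk}Z_{jk}=0$ indexed by unordered pairs, with only the edge relations contributing nontrivial equations. Your version redoes that bookkeeping by hand inside $V^*\otimes V^*$; it is longer but self-contained and makes the orthogonality transparent, whereas the paper's is shorter at the cost of leaning on the cited formula.
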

\begin{proof}
Following \cite{froe}, Chapter 2, we have that for a commutative quadratic algebra $R\cong k[X_1,\ldots,X_m]/\langle f_1,\ldots,f_r\rangle$, where $f_i=\sum_{j\leq k}b_{ijk}X_jX_k$, the dual is given by $k\langle Y_1,\ldots,Y_m\rangle/J$, where $J=\langle g_1,\ldots, g_s\rangle$, $s=\binom{n}{2}-r$.  Here $g_i=\sum_{j\leq k} c_{ijk}[Y_i,Y_j]$, $[Y_i,Y_j]=Y_iY_j+Y_jY_i$ and the matrices $c_{ijk}$, $i=1,\ldots,s$ are a basis for the solutions of the linear system $\sum_{j\leq k} b_{ijk}Z_{jk}=0$, $i=1,\ldots, r$. Applying this to $S/J_G$, by relabeling $y_i=X_{n+i}$, we are reduced to solving the equation $Z_{j,n+k}=Z_{n+j,k}$, for $\{i,j\}\in E(G)$. The other $Z_{jk}$ give linearly independent solutions, hence the equations $[Y_i,Y_j]=0$ for $j\leq i$. Relabeling again, we arrive at the relations listed.
\end{proof}

\begin{lemma}\label{resolution}
Let $R=S/J_G$ for a binomial edge ideal. Then the start of the minimal free resolution of $k$ over $R$ is of the form $$\cdots \to R^{\beta_2} \to R^{\beta_1} \to 0,$$ where $\beta_1=2n$, $\beta_2=\binom{2n}{2}+|E|$.
\end{lemma}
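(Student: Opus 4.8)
The statement packages the two Betti numbers together with the fact, asserted in the introduction, that the corresponding syzygies are linear; the plan is to obtain all of this at once from the start of the \emph{acyclic closure} of $k$ over $R$ (a minimal free DG-algebra resolution of $k$ built à la Tate), which only uses $R$ in homological degrees $\le 2$. Fix notation: let $T=k[x_1,\dots,x_n,y_1,\dots,y_n]$ with graded maximal ideal $\mathfrak n$, so $R=T/J_G$ with $J_G\subseteq\mathfrak n^2$, and write $\mathfrak m=\mathfrak n R$ for the graded maximal ideal of $R$.

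First, $\beta_1$. Since $R$ is standard graded and the classes of the $x_i,y_i$ are $k$-linearly independent in $\mathfrak m/\mathfrak m^2$ (because $J_G\subseteq\mathfrak n^2$), we get $\beta_1=\dim_k\operatorname{Tor}_1^R(k,k)=\dim_k\mathfrak m/\mathfrak m^2=2n$, and the first differential $R(-1)^{2n}\to R$ is visibly linear.

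Next, $\beta_2$ together with linearity. In the acyclic closure $R\langle X\rangle$ the variables of homological degree $1$ form a minimal generating set of $\mathfrak m$ (so there are $2n$ of them), the variables of homological degree $2$ correspond to a minimal generating set of $H_1(K)$, where $K=K^R(x_1,\dots,x_n,y_1,\dots,y_n)$ is the Koszul complex on those degree-$1$ variables, and no variables of homological degree $\ge 3$ enter in homological degrees $\le 2$. Since $R\langle X\rangle$ is a minimal resolution, $\beta_2=\operatorname{rank}_R R\langle X\rangle_2=\binom{2n}{2}+\mu(H_1(K))$, the binomial term counting the products of pairs of the degree-$1$ variables. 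To compute $H_1(K)$: the $x_i,y_i$ are a regular system of parameters in $T$, so $K=K^T(\underline x)\otimes_T R$ and hence $H_i(K)=\operatorname{Tor}_i^T(k,R)$; applying $-\otimes_T k$ to $0\to J_G\to T\to R\to 0$ and using $J_G\subseteq\mathfrak n$ yields $H_1(K)\cong J_G/\mathfrak n J_G$. This is annihilated by $\mathfrak m$ (multiplication by $\mathfrak n$ carries $J_G$ into $\mathfrak n J_G$), so it is a $k$-vector space of dimension $\mu(J_G)$, concentrated in internal degree $2$ because $J_G$ is generated by the quadrics $f_{ij}$. Finally the $f_{ij}$, $\{i,j\}\in E$ (say $i<j$), are $k$-linearly independent, since the monomial $x_iy_j$ occurs in $f_{ij}$ and in no other generator, so $\mu(J_G)=|E|$ and $\beta_2=\binom{2n}{2}+|E|$. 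Moreover every homological-degree-$2$ generator of the resolution — both the pair products of degree-$1$ variables and the $|E|$ new degree-$2$ variables — has internal degree $2$, so the second differential $R(-2)^{\binom{2n}{2}+|E|}\to R(-1)^{2n}$ has linear entries, i.e. the second syzygy is linear.

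The only step carrying real content is identifying $H_1$ of the Koszul complex of $R$ with the space $J_G/\mathfrak n J_G$ of defining quadrics and observing that it is killed by $\mathfrak m$; the remainder is the formal bookkeeping of Tate's construction, and I do not anticipate a genuine obstacle here. In particular the argument uses no combinatorial feature of $G$ beyond $J_G$ being quadratic with $|E|$ minimal generators. Alternatively one could simply quote the general identity $\beta_2^R(k)=\binom{\operatorname{edim}R}{2}+\mu(\mathfrak a)$ valid for any $R=T/\mathfrak a$ with $\mathfrak a\subseteq\mathfrak n^2$; the computation above is essentially the proof of that identity, arranged so as to also make the linearity statement transparent.
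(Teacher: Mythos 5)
Your argument is correct, and it takes a genuinely different route from the paper. The paper works by hand at the level of first syzygies of $\mathfrak m$: it observes that minimal second syzygies must have linear entries because $J_G$ is quadratic, exhibits the $\binom{2n}{2}$ Koszul (commutativity) relations $\chi_i e_j-\chi_j e_i$, then notes that each defining quadric $x_iy_j-x_jy_i$ rewrites as a linear syzygy $y_j\cdot x_i-y_i\cdot x_j=0$ among the generators, contributing $|E|$ more, and asserts --- without a detailed justification --- that no further independent solutions exist. You instead run the Tate/acyclic-closure machine: $\beta_2=\binom{2n}{2}+\mu(H_1(K))$ with $H_1(K)\cong\operatorname{Tor}_1^T(k,R)\cong J_G/\mathfrak n J_G$, and $\mu(J_G)=|E|$ by the monomial-support argument. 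The two computations land in the same place (your $|E|$ degree-$2$ Tate variables are exactly the paper's extra syzygies, and the $\binom{2n}{2}$ exterior products are its Koszul syzygies), but your version is the more robust one: the general identity $\beta_2^R(k)=\binom{\operatorname{edim}R}{2}+\mu(\mathfrak a)$ automatically certifies that there are no additional second syzygies, which is precisely the point the paper's proof leaves to inspection, and it makes the linearity of the first two differentials (asserted in the paper's introduction but only implicit in its proof) transparent. The only cost is invoking standard but heavier machinery (minimality of the acyclic closure, the identification of $H_1$ of the Koszul complex) where the paper stays elementary.
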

\begin{proof}
The first rank is trivial. For the second one, we need 
to solve the linear equation $a \cdot \chi=a_1\chi_1+\ldots + 
a_{2n}\chi_{2n}=0$ over $R$, where $\chi_i=x_i, 
i=1, \ldots, n$ and $\chi_i=y_{i-n}, i=n+1,\ldots, 2n$ are the 
basis elements of $R^{\beta_1}$. As $J_G$ is a quadratic ideal, 
note that any $a_i$ must in the minimal case have degree $1$. Then $a=(0,
\ldots,-\chi_j, 0, \ldots, 0, \chi_i, \ldots, 0)$ where the 
$\chi_i, \chi_j$ are in $j$th and $i$th positions, respectively, give the $\binom{2n}{2}$ solutions. Inspecting quadratic zerodivisors in our ring furthermore shows that the syzygies given by $(0,\ldots,y_j,\ldots,-y_i,\ldots,0)$ also give basis elements for solutions, and indeed we cannot get more than $|E|$ of these.
\end{proof}


\begin{thebibliography}{9}
\bibitem{bf}
J{\"o}rgen Backelin and Ralf Fr{\"o}berg.
Koszul algebras, {V}eronese subrings and rings with linear resolutions.
\emph{Rev. Roumaine Math. Pures Appl.} \textbf{30} (1985), no.2, 85--97.

\bibitem{cox}
David Cox and Andrew Erskine. 
On closed graphs.
\emph{ar{X}iv:1306.5149}, 24pp (2013).


\bibitem{cr}
Marilena Crupi and Giancarlo Rinaldo.
Binomial edge ideals with quadratic {G}r\"obner bases.
\emph{Electron. J. Combin.} \textbf{18} (2011), no. 1, 211--13.


\bibitem{eis}
David Eisenbud, \textit{Commutative algebra: With a view toward algebraic geometry,} volume 150 of \textit{Graduate Texts in Mathematics}. Springer-verlag, New York, 2nd edition, 1995.



\bibitem{ecm}
Viviana Ene, J\"urgen Herzog and Takayuki Hibi.
Cohen-{M}acaulay binomial edge ideals.
\emph{Nagoya Math. J.} \textbf{204} (2011), 57--68.


\bibitem{ek}
Viviana Ene, J\"urgen Herzog and Takayuki Hibi.
Koszul binomial edge ideals.
\emph{ar{X}iv:1310.6426}, 11pp (2013).

\bibitem{ekf}
Viviana Ene, J\"urgen Herzog and Takayuki Hibi.
Linear flags and Koszul filtrations.
\emph{ar{X}iv:1312.2190}, 12pp (2013).



\bibitem{froe}
Ralf Fr{\"o}berg.
Koszul algebras.
\emph{Advances in commutative ring theory} \textbf{205} (1999), 337--350.

\bibitem{m2}
Daniel Grayson and Michael Stillman.
\emph{Macaulay2, a software system for research in algebraic geometry}. Available at http://www.math.uiuc.edu/Macaulay2/ .

\bibitem{ci}
Jurgen Herzog, Takayuki Hibi, Freyja Hreinsdottir, Thomas Kahle and Johannes Rauh. 
Binomial edge ideals and conditional independence statements. \textit{Adv. in Appl. Math.}, 45(3):317--333, 2010.


\bibitem{sm}
Sara Saeedi Madani and Dariush Kiani.
Binomial edge ideals of graphs.
\emph{Electron. J. Combin.} \textbf{19} (2012), no. 2, 44, 6.

\bibitem{con}
Asia Rauf and Giancarlo Rinaldo.
Construction of Cohen–Macaulay Binomial Edge Ideals. \textit{Communications in Algebra,} 42.1 (2014): 238--252.

\end{thebibliography}
\end{document}